\def \al{\alpha}
\def \bs{\backslash}
\def \C{{\mathbb C}}
\def \CB{\mathcal{B}}
\def \CO{\mathcal{O}}
\def \e{\emph}
\def \Eig{\operatorname{Eig}}
\def \F{\mathbb F}
\def \Ga{\Gamma}
\def \ga{\gamma}
\def \GL{\operatorname{GL}}
\def \Hom{\operatorname{Hom}}
\def \la{\lambda}
\def \N{{\mathbb N}}
\def \ol{\overline}
\def \PGL{\operatorname{PGL}}
\def \PGSP{\operatorname{PGSP}}
\def \Per{\operatorname{Per}}
\def \Q{{\mathbb Q}}
\def \R{{\mathbb R}}
\def \T{{\mathbb T}}
\def \tr{\operatorname{tr}}
\def \what{\widehat}
\def \Z{{\mathbb Z}}
\def \({\left(}
\def \){\right)}
\newtheorem{thm}{Theorem}[section]
\newtheorem{theorem}[thm]{Theorem}
\newtheorem{lemma}[thm]{Lemma}
\newtheorem{proposition}[thm]{Proposition}
\numberwithin{equation}{section} \theoremstyle{definition}
\newtheorem*{remark}{Remark}
\renewcommand\sp[1]{\langle #1\rangle}
\newcommand{\norm}[1]{\left|\left| #1\right|\right|}
\newcommand{\stack}[2]{\genfrac{}{}{0pt}{}{#1}{#2}}
\begin{document}

\pagestyle{myheadings} \markright{ZETA FUNCTIONS OF F1-BUILDINGS}

\title{Zeta Functions of $\F_1$-buildings\\ \ \\
\small J. Math. Soc. Japan. Volume 68, Number 2, 807-822 (2016)}
\author{Anton Deitmar \& Ming-Hsuan Kang\thanks{This research was supported by NSC grant 100-2115-M-009-008-MY2 (M-H. Kang).
The research was performed
while the first author was visiting the Shing-Tung Yau Center in National Chiao Tung University and 
the National Center for Theoretical
Sciences, Mathematics Division, in Hsinchu, Taiwan. The authors would like
to thank both institutions for their support and hospitality.}}
\date{}
\maketitle

{\bf Abstract.} The analogue of the Bruhat-Tits building of a p-adic group in F1-geometry is a single apartment. In this setting, the trace formula gives rise to a several variable zeta function analogously to the p-adic case. The analogy carries on to the fact that the restriction to certain lines yield zeta functions which are defined in geometrical terms. Also, the classical formula of Ihara has an analogue in this setting.

{\bf Keywords:} field of one element, Bruhat-Tits building, Ihara zeta function
 
{\bf MSC:} {\bf 11F70}, 11F25, 11F66, 14G10, 20E42 

\section*{Introduction}
It has been observed by several authors that many formulae in the geometry of Bruhat-Tits buildings over a non-archimedean field $F$ of residue cardinality $q$ do still make sense for $q=1$, in which case they coincide with the analogous formulae on the corresponding spherical geometry.
Jacques Tits asked  in \cite{Tits}, whether the explanation of this phenomenon might be the existence of a ``field of one element'' $\F_1$ such that for a Chevalley group $G$ the group $G(\F_1)$ equals the Weyl group of $G$. 
In the first decade of the new millenium, various approaches to the elusive ``field'' $\F_1$ have been suggested, see \cites{KOW,Soule,F1,Haran,Toen,CC}.

In the middle between the geometry of the full Bruhat-Tits building and the spherical geometry, which may be considered as the local geometry of an apartment, there is the geometry of a single apartment and its affine Weyl group.
If the field of one element is the analogue of the residue field of a non-archimedean field, then the geometry of a single apartment should correspond to a ``non-archimedean field in characteristic one''. In order to fix terms we shall write $\Q_1$.
It appears that the approach via monoids of \cite{F1} yields an explanation whereby $\Q_1$ is the infinite cyclic group and one can describe the building of $\PGL_n(\Q_1)$ in terms of lattices in perfect analogy to the case of a non-archimedean field.

Another strand of investigations, which is connected to $\F_1$-theory in this paper, is the theory of generalized Ihara zeta functions.
The Ihara zeta function for a finite graph is defined as an Euler product over closed cycles in the graph. Its reciprocal turns out to be a polynomial and can be expressed in terms of the characteristic function of the adjacency operator, this latter fact being known under the name \e{Ihara formula}. Due to the complicated simplicial structure of higher dimensional building, there is no general formulation or conjecture of Ihara formula for higher dimensional building. The only known cases are $\PGL_3$ and $\PGSP_4$, see \cite{KL,KangLi,FLW}.

In Section \ref{SecIhara1}, we express the original Ihara formula for $\PGL_2(F)$ as a connection between Ihara zeta function and Langlands $L$-function.
Such a reformulation suggests a possible way to obtain a generalized Ihara formula and it still make sense when $q=1$.
In section 2 we apply the philosophy of the field of one element \cite{F1} to buildings.
In the world of $\F_1$-rings there exists, next to $\F_1$, a universal $\F_1$-algebra $\Z_1$ and its quotient field $\Q_1$.
It turns out that $\PGL_n(\Q_1)$ coincides with the extended affine Weyl group attached to an apartment the Bruhat-Tits of $\PGL_n(\Q_p)$ for any  prime number $p$.
It therefore is suggestive to consider one such apartment as the ``building'' of the grouop $\PGL_n(\Q_1)$.
In section 3 we develop the trace formula for the group $\PGL_n(\Q_1)$ which involves, among other things, the determinantion of its unitary dual.
As an application, we get a several variable zeta function $S_\Ga(u)$ which contains all information on the building and its quotient by a subgroup $\Ga$ from a group-theoretical viewpoint.
In section 4 we give a geomertic interpretation of $S_\Ga(u)$ in terms of homotopy classes of closed geodesics.
At the end of the paper, we give a generalized Ihara formula for $\PGL_n(\Q_1)$. This can be a starting point of finding a generalized Ihara formula of $\PGL_n(F)$.

\section{Ihara Zeta functions and Langlands L-functions} \label{SecIhara1}
Let $F$ be a non-archimedean field with $q$ elements in its residue field. Let $\CO_F$ be the ring of integers of $F$ and let $\pi$ be a uniformizer.
Given a discrete torsion-free cocompact subgroup $\Ga$ of $G(F)=\PGL_2(F)$, the quotient of the 
Bruhat-Tits tree of $\PGL_2(F)$ by $\Gamma$ is a finite $(q+1)$-regular graph $X_\Ga$. The Ihara zeta function of $X_\Ga$ is defined by
$$ Z_{X_\Ga}(u)=\prod_{[c]}(1-u^{l(c)})^{-1},
$$
where the product runs over all equivalence classes of prime cycles $[c]$ and $l(c)$ denotes the length.
The Ihara formula asserts that when $|u|$ is small enough, the reciprocal of $Z_{X_\Ga}(u)$ converges to a polynomial so that
$$
Z_{X_\Ga}(u)^{-1}=\det(1-Au+q u^2)(1-u^2)^{-\chi},
$$
where $A$ is the adjacency operator and $\chi$ is the Euler-characteristic (which is always non-positive).
The adjacency operator $A$ is indeed a Hecke operator $K \left(\begin{smallmatrix} 1 & \\ & \pi \end{smallmatrix}\right)K$ and it acts on $K$-fixed vectors of $L^2(\Gamma \backslash G)$. Here $K=G(\CO_F)$ is a maximal compact subgroup of $G$.

On the other hand, recall that unramified irreducible representations $(\rho,V)$ of $G$ are parametrized by the Satake parameters $s_\rho$, which are semisimple conjugacy classes in the complex dual group of $G$ which is isomorphic to SL$_2(\C)$. The Langlands $L$-function associated to $\rho$ is given by 
$$ L(\rho, u) = \det( 1- s_\rho u)^{-1}.$$
Now for the representation $L^2(\Gamma \backslash G)$, we define its Langlands $L$-function as the product of the $L$-functions of its irreducible unramified subrepresentations. From the Satake isomorphism, it is easy to see that
$$ L\left(L^2(\Gamma \backslash G), q^{1/2}u \right) = \det(1-Au + qu^2)^{-1}.$$
Thus, we can rewrite Ihara formula as 
$$Z_{X_\Ga}(u)= (1-u^2)^{\chi}  L\left(L^2(\Gamma \backslash G), q^{1/2}u \right).$$

Note that the right hand side of the above formula can be canonically generalized to $\PGL_n$ and it remains to figure out the left hand side.

On the other hand, in the case that $G=\PGL_2(F)$, if we just naively let $q=1$, then the building of $G$(a $(q+1)$-regular tree) becomes a 2-regular tree, which is indeed an apartment of the building. Then we shall replace $G$ by its affine Weyl group $W$, which acts on the standard apartment as automorphisms. 
In this case, for a discrete torsion-free cocompact subgroup $\Ga$ of $W$, one can define Langlands $L$-function of $L^2(\Ga \backslash W)$ by a similar way such that the above Ihara formula still holds.
 Details of the above would be covered later for the case of $\PGL_n$.

\section{The building of $\PGL_n(\Q_1)$}\label{sec1}
According to the philosophy of \cite{F1} we will denote
the trivial monoid of one element by $\F_1=\{1\}$.
Further we write $\Z_1=\{1,\tau,\tau^2,\dots\}$ for the free monoid of one generator $\tau$ and $\Q_1=\{\dots,\tau^{-1},1,\tau,\dots\}$ for its quotient group.
A \e{module} of a given monoid $A$ is a set with an $A$-action.
The category of $A$-modules contains direct sums, these turn out to be disjoint unions of modules.
For a given natural number $n$ consider the $\Q_1$-module $V=V_n=\bigoplus_{j=1}^n\Q_1=\coprod_{j=1}^n\Q_1$.
A \e{lattice} in $V$ is a finitely generated $\Z_1$-submodule $L$ of $V$ with the property that $\Q_1 L=V$.
Two lattices $L,L'$ are \e{homothetic} if there exists $\al\in\Q_1$ with $L'=\al L$.

The group $\GL_n(\Q_1)$ is by definition the automorphism group of the $\Q_1$-module $V=\bigoplus_{j=1}^n\Q_1$.
Each such automorphism permutes the copies of $\Q_1$ that make up $V$ and multiplies the inhabitants of each copy by a scalar in $\Q_1$.
The structure of this group is 
$$
\GL_n(\Q_1)\cong \Q_1^n\rtimes \Per(n),
$$
where $\Per(n)$ denotes the permutation group in $n$ letters.
Its center is the subgroup $\GL_1(\Q_1)\cong\Q_1$ embedded diagonally.
The group $\PGL_n(\Q_1)$ is defined to be
$$
\PGL_n(\Q_1)=\GL_n(\Q_1)/\GL_1(\Q_1).
$$
One way to picture $\GL_n(\Q_1)$ is to consider all $n\times n$ matrices with exactly one non-zero entry in every row and column and this entry be in $\Q_1$.
Then $\PGL_n(\Q_1)$ consist of homothety classes of such matrices.

The building $\CB$ of $\PGL_n(\Q_1)$ is the $(n-1)$-dimensional building defined as follows.
The set of vertices is the set of all homothety classes of lattices in $V_n$.
For $1\le k\le n-1$, distinct vertices $[L_0],\dots,[L_k]$ form the vertices of a $k$-dimensional face if, after adjusting the order, one has representatives satisfying $L_0\supset L_1\supset\dots\supset L_k\supset\tau L_0$.
Besides the mere geometry of being a building, this lattice description of $\CB$ adds more features, like the order of the vertices of a face which is determined up to a cyclic permutation.
First of all, there is a standard chamber $C_0$ given by the vertices
\begin{align*}
L_0&=\sp{e_1,\dots e_{n-2},e_{n-1},e_n},\\
L_1&=\sp{e_1,\dots, e_{n-2},e_{n-1},\tau e_n},\\
L_2&=\sp{e_1,\dots, e_{n-2},\tau e_{n-1},\tau e_n},\\
&\vdots\\
L_n&=\sp{e_1,\tau e_2,\dots, \tau e_{n-2},\tau e_{n-1},\tau e_n}.
\end{align*}
Here $e_i$ stands the element 1 in the $i$-th copy of $\Q_1$ in $V$. A general lattice in $V$ can be written as 
$$ L= \sp{ \tau^{c_1}e_1,\cdots, \tau^{c_n}e_n }$$
for some integers $c_1,\cdots,c_n$. We define its type to be $c_1+\cdots+c_n \mod n$.
Note that the group
$(1,\dots,1)\rtimes\Per(n)$ is the stabilizer of $L_0$.

Observe that under this construction the building of $\PGL_n(\Q_1)$ is exactly isomorphic to any apartment of the building attached to $\PGL_n(\Q_p)$ for a prime number $p$ and the group $\PGL_n(\Q_1)$ becomes the extended affine Weyl group of $\PGL_n(\Q_p)$.

\section{The trace formula for $\PGL_n(\Q_1)$}
Let $\Delta\subset\Z^n$ denote the subgroup spanned by the element $(1,\dots,1)$.
We write $\Lambda=\Z^n/\Delta$ and $G=\PGL_n(\Q_1)\cong (\Z^n/\Delta)\rtimes\Per(n)=\Lambda\rtimes\Per(n)$.
We denote the permutation subgroup $\Per(n)$ by $K$ and we
let $\Ga\subset G$ denote a subgroup of finite index.
The trace formula \cite{DE} for the pair $(G,\Ga)$ says that for any $f\in \ell^1(G)$ one has
$$
\sum_{\pi\in\what G} N_\Ga(\pi)\tr\pi(f)=\sum_{[\ga]}\#(\Ga_\ga\bs G_\ga)\CO_\ga(f),
$$
where $L^2(\Ga\bs G)=\bigoplus_{\pi\in\what G}N_\Ga(\pi)\pi$ is the decomposition into irreducibles, the sum on the right hand side runs over all conjugacy classes $[\ga]$ in $\Ga$, the groups $G_\ga$ and $\Ga_\ga$ are the centralizers of $\ga$ in $G$ and $\Ga$, and
$$
\CO_\ga(f)=\sum_{x\in G/G_\ga}f(x\ga x^{-1})
$$
is the \e{orbital sum}.

Consider the group $G_\R=(\R^n/\Delta(\R))\rtimes\Per(n)$.
We denote the subgroup $(\R^n/\Delta(\R))\rtimes\{1\}\cong\Lambda\otimes\R$ by $\Lambda_\R$.

Let $G^+_\R$ be the set of all $(v,p)\in G_\R$ such that $p(v)=v$.
Then $G_\R^+$ is the set of all $ak\in G_\R$ with $a\in \Lambda_\R$, $k\in K$ such that $ak=ka$.

\begin{lemma}\label{lem2.1}
The set $G^+_\R$ is a set of representatives for $G_\R$ modulo $\Lambda_\R$-conjugacy.
The set $G^+_\R$ is stable under $K$-conjugation. 
\end{lemma}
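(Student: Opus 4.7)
My plan is to reduce both statements to a direct computation in the semidirect product, combined with the standard eigenspace decomposition for a finite-order operator on a real vector space.

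First, I set up the conjugation formulae. Writing elements of $G_\R$ as pairs $(v,p)$ with $v \in \Lambda_\R$ and $p \in \Per(n)$, the semidirect product multiplication $(a,k)(b,l)=(a+k(b),kl)$ gives, for $w \in \Lambda_\R$,
$$
(w,1)(v,p)(w,1)^{-1} = (v + (I-p)w,\, p),
$$
and for $k \in K$,
$$
(0,k)(v,p)(0,k)^{-1} = (k(v),\, kpk^{-1}).
$$
The second identity immediately yields the second assertion: if $p(v)=v$, then $(kpk^{-1})(k(v)) = k(p(v)) = k(v)$, so $K$-conjugation preserves $G_\R^+$.

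For the first assertion, the first identity shows that $\Lambda_\R$-conjugation preserves the $K$-component $p$ and modifies $v$ by an element of $(I-p)\Lambda_\R$. So the claim reduces, for each fixed $p$, to the statement that every coset of $(I-p)\Lambda_\R$ in $\Lambda_\R$ contains exactly one representative in $\ker(I-p)$. I would verify this using the fact that $p$ acts on $\Lambda_\R$ with finite order: averaging, $P := \frac{1}{\mathrm{ord}(p)}\sum_{i=0}^{\mathrm{ord}(p)-1}p^i$ is a projection onto $\ker(I-p)$ with kernel $(I-p)\Lambda_\R$, giving the direct sum decomposition
$$
\Lambda_\R = \ker(I-p) \oplus (I-p)\Lambda_\R.
$$
Given $v \in \Lambda_\R$, write $v = v_0 + (I-p)w_0$ with $v_0 \in \ker(I-p)$; then conjugation by $(-w_0, 1)$ moves $(v,p)$ to $(v_0, p) \in G_\R^+$, establishing existence. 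For uniqueness, if $(v_1,p)$ and $(v_2,p)$ both lie in $G_\R^+$ and are $\Lambda_\R$-conjugate, then $v_2 - v_1 \in \ker(I-p) \cap (I-p)\Lambda_\R = 0$.

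There is essentially no obstacle here beyond bookkeeping: the two subtleties are (a) checking that the decomposition $\Lambda_\R = \ker(I-p) \oplus (I-p)\Lambda_\R$ descends correctly from $\R^n$ (which it does, since $p$ fixes $\Delta(\R)$ pointwise, so $\Delta(\R) \subset \ker(I-p)$ on $\R^n$ and the decomposition passes to the quotient), and (b) keeping track of signs in the conjugation formula. The mild finite-order linear algebra is what does all the work.
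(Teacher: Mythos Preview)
Your proof is correct and follows essentially the same strategy as the paper: both reduce to the decomposition $\Lambda_\R = \ker(I-p) \oplus (I-p)\Lambda_\R$, from which existence and uniqueness of the $G_\R^+$-representative are immediate. The only minor difference is in how this decomposition is obtained---the paper uses that $p$ is an orthogonal transformation (so $\Eig(p,1)^\perp = \mathrm{im}(1-p)$), whereas you use the averaging projection coming from finite order; your argument is marginally more general but otherwise equivalent. You also spell out the $K$-stability and the passage to the quotient by $\Delta(\R)$, both of which the paper leaves implicit.
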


\begin{proof}
Let $(v,p)\in G_\R$ be arbitrary and $n=(x,1)\in \Lambda_\R$.
Then
$$
n(v,p)n^{-1}=(v+x-p(x),p).
$$
Let $\Eig(p,1)$ denote the eigenspace of $p$ for the eigenvalue $1$, i.e., the set of all $y\in \R^n$ such that $p(y)=y$.
We shall show that for every $v\in\R^n$ there exists $x\in\R^n$ such that $v+x-p(x)$ lies in $\Eig(p,1)$.
This then implies the same property modulo $\Delta$.
Since $p$ is an orthogonal transformation on $\R^n$, there is a $p$-stable direct sum  decomposition $\R^n=\Eig(1,p)\oplus\Eig(1,p)^\perp$ and since $1-p$ is injective on orthogonal space $\Eig(1,p)^\perp$, it is also surjective, i.e., the space $\Eig(1,p)^\perp$ equals the image of $(1-p)$.
Hence for any $v\in\R^n$ there exists $x\in\R^n$ such that $v+x-p(x)$ lies in $\Eig(p,1)$ as claimed.
Therefore the set $G_\R^+$ contains a set of representatives of $\Lambda_\R$-conjugacy.
Next assume $(v,p)\in G_\R^+$ and $a\in \Lambda_\R$ with $a(v,p)a^{-1}\in G_\R^+$.
Then $v\in \Eig(p,1)$ and if $a=(x,1)$, then $v+x-p(x)\in\Eig(p,1)$ as well, so that $x-p(x)\in\Eig(p,1)$, which implies $x-p(x)=0$, so $a(v,p)a^{-1}=(v,p)$ and $G_\R^+$ is indeed a set of representatives.
\end{proof}

Next we determine the unitary dual of $G$.
The unitary dual $\widehat \Lambda$ of $\Lambda\cong\Z^{n-1}$ is isomorphic to $\T^{n-1}$, where $\T=\{z\in\C:|z|=1\}$ is the circle group.
There is a standard way of constructing the unitary dual of a semi-direct product out of the duals of the factors.
The group $K$ acts by conjugation on $\Lambda$ and hence on its dual $\widehat \Lambda$.
For a given $\chi\in\widehat \Lambda$ let $K_\chi$ denote the stabilizer of $\chi$ in the group $K$.
For $(\sigma,V_\sigma)\in\widehat K_\chi$ define $V_{\chi,\sigma}$ to be the space of all $\phi\in L^2(K,V_\sigma)$ such that $\phi(mk)=\sigma(m)\phi(k)$ holds for all $m\in K_\chi$ and all $k\in K$.
On this space we define a unitary representation $\pi_{\chi,\sigma}$ of $G$ by
$$
\pi_{\chi,\sigma}(n,u)\phi(k)=\chi(kun(ku)^{-1})\phi(ku).
$$

\begin{proposition}
The unitary dual $\widehat G$ of $G$ is the set of all representations $\pi_{\chi,\sigma}$, where $\chi$ runs through a set of representatives of $\widehat \Lambda/K$ and $\sigma$ runs through $\widehat K_\chi$.
Note that as special case we have $\chi=1$ in which case $K_\chi=K$, so $\widehat K$ is in a canonical way a subset of $\widehat G$.
Every $\pi\in\what G$ is finite-dimensional.
\end{proposition}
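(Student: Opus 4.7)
The plan is to apply the standard Mackey machine for semidirect products $G=N\rtimes K$ with $N$ abelian normal and $K$ finite. In our case $N=\Lambda\cong\Z^{n-1}$ and $K=\Per(n)$, and everything simplifies dramatically because $K$ is finite, so every $K$-orbit in $\what\Lambda$ is automatically finite.

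The first observation is that the formula for $\pi_{\chi,\sigma}$ is nothing but the induced representation $\Ind_{\Lambda\rtimes K_\chi}^{G}(\chi\otimes\sigma)$, where $\chi\otimes\sigma$ is the representation of $\Lambda\rtimes K_\chi$ sending $(n,m)$ to $\chi(n)\sigma(m)$ (well-defined because $K_\chi$ fixes $\chi$); the displayed formula in the proposition is just this induction unfolded after identifying the induced Hilbert space with $\{\phi:K\to V_\sigma \mid \phi(mk)=\sigma(m)\phi(k)\text{ for all }m\in K_\chi\}$. Since $K$ is finite, $[G:\Lambda\rtimes K_\chi]=[K:K_\chi]<\infty$, which immediately yields the last claim of the proposition: $\dim\pi_{\chi,\sigma}=[K:K_\chi]\dim\sigma<\infty$.

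Next I would verify irreducibility and pairwise inequivalence via Mackey's irreducibility criterion. For $k\in K\setminus K_\chi$ the characters $\chi$ and $\chi^k$ of $\Lambda$ are inequivalent, so all Mackey intertwining spaces attached to nontrivial double cosets vanish, while the trivial double coset contributes $\Hom_{K_\chi}(\sigma,\sigma)=\C$. Hence each $\pi_{\chi,\sigma}$ is irreducible, and a parallel computation shows that $\pi_{\chi,\sigma}\cong\pi_{\chi',\sigma'}$ forces $\chi,\chi'$ to lie in the same $K$-orbit in $\what\Lambda$ and the $\sigma$'s to correspond under the induced isomorphism of stabilizers.

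For surjectivity, take any $\pi\in\what G$. Restricting to the abelian normal subgroup $\Lambda$ yields a projection-valued measure on $\what\Lambda$ that is $K$-equivariant under conjugation. Irreducibility of $\pi$ combined with normality of $\Lambda$ forces this measure to be supported on a single $K$-orbit, which is finite because $K$ is, so $\pi|_\Lambda$ splits as a finite direct sum of characters. Choosing $\chi$ in the orbit, the $\chi$-isotypic subspace carries an irreducible representation $\sigma$ of $K_\chi$, and Frobenius reciprocity then gives $\pi\cong\pi_{\chi,\sigma}$. The only delicate step is this spectral-theoretic decomposition of $\pi|_\Lambda$, but it is routine once one exploits the finiteness of $K$; everything else is standard Mackey bookkeeping.
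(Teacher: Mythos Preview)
Your proof is correct and follows the same route as the paper: both invoke the Mackey machine for a semidirect product $\Lambda\rtimes K$ with $\Lambda$ abelian and $K$ finite, identifying $\pi_{\chi,\sigma}$ as the induced representation $\Ind_{\Lambda\rtimes K_\chi}^{G}(\chi\otimes\sigma)$ and noting that finiteness of $K$ makes every orbit finite and every irreducible finite-dimensional. The paper compresses all of this into a single sentence (viewing $V_{\chi,\sigma}$ as sections of the homogeneous bundle over the $K$-orbit of $\chi$), whereas you actually spell out the irreducibility, inequivalence, and exhaustion steps; your version is more informative but not a different argument.
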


\begin{proof}
Let $(\pi,V_\pi)\in\what G$.
The restriction of $\pi$ to the abelian subgroup $\Lambda$ is a direct sum of characters, so $V_\pi\cong\bigoplus_{\mu\in\hat\Lambda}V_\pi(\mu)$, where $V_\pi(\mu)$ is the set of all $v\in V_\pi$ such that $\pi(\la)v=\mu(\la)v$ holds for every $\la\in\Lambda$.
For some $\chi\in\hat\Lambda$ we have $V_\pi(\chi)\ne 0$.
If $v\in V_\pi(\chi)$ then it is easy to see that $\pi(G)v\subset \bigoplus_{\mu\in K\chi}V_\pi(\mu)$, where the sum runs over the finite $K$-orbit of $\chi$.
The isotypical space $V_\pi(\chi)$ is acted upon by the stabilizer group $K_\chi$. Let $T$ be a unitary $K_\chi$-intertwiner on this space, then $T$ extends to a unitary $\pi(G)$-intertwiner $\tilde T$ via $\tilde T(\pi(g)v)=\pi(g)Tv$.
By Schur's lemma the map $\tilde T$ is a scalar mutiple of the identity, hence so is $T$ and so, by the converse direction of Schur's lemma, the $K_\chi$-representation $\sigma$ on $V_\pi(\chi)$ is irreducible, so that finally $\pi\cong\pi_{\chi,\sigma}$.
Similarly one sees that $\pi_{\chi,\sigma}$ is irreducible.
Furthermore, $\pi_{\chi,\sigma}$ is finite-dimensional, so every $\pi\in\what G$ is finite-dimensional.

If on the other hand, $\pi_{\chi,\sigma}\cong\pi_{\chi',\sigma'}$, then $\chi'$ must lie in the $K$-orbit of $\chi$. By conjugation one may assume that $\chi=\chi'$.
Then the $K_\chi$-action on $V_{\pi_{\chi,\sigma}}(\chi)$ and $V_{\pi_{\chi,\sigma'}}(\chi)$ must be isomorphic, which means that $\sigma\cong\sigma'$.
\end{proof}

The group $K\cong\Per(n)$ acts on $\Lambda_\R\cong\R^n/\Delta(\R)$ and the closed cone
$$
\Lambda_\R^+=\{ [x_1,\dots,x_n]\in \Lambda_\R: x_1\ge x_2\ge\dots\ge x_n\}
$$
is a set of representatives of $\Lambda_\R/K$.
Define elements $\al_1,\dots,\al_{n-1}$ of the dual space by 
$$
\al_1(x)=n!(x_1-x_2),\dots,\al_{n-1}(x)=n!(x_{n-1}-x_n).
$$
Then $\Lambda_\R^+$ is the set of all $x\in \Lambda_\R$ with $\al_1(x)\ge 0,\dots,\al_{n-1}(x)\ge 0$.
Any subset $S\subset \{1,\dots,n-1\}$ defines a face of the cone $\Lambda_\R^+$ by
$$
\Lambda^+_S=\{x\in \Lambda_\R^+: \al_j(x)=0\ \Leftrightarrow\ j\in S\}.
$$
The cone $\Lambda_\R^+$ is the disjoint union of its faces, in particular, $\Lambda_{\emptyset}^+$ is the open interior of $\Lambda_\R^+$ and $\Lambda_{\{1,\dots,n-1\}}^+$ is the point $0$.

\begin{lemma}\label{lem2.3}
The set
$G_\R^+\cap (\Lambda_\R^+\times K)
$
contains a set of representatives for $G_\R$ modulo conjugation.
If $(a,k),(a',k')\in G_\R^+\cap (\Lambda_\R^+\times K)$ are conjugate, then $a=a'$ and $k'=pkp^{-1}$ for some $p\in K$ with $p(a)=a$.

Therefore, there exists unique conjugation-invariant functions $l_1,\dots,l_{n-1}$ on $G_\R$ such that
$$
l_j(v,k)=\al_j(v),\qquad\text{if}\ (v,k)\in G_\R^+\cap (\Lambda_\R^+\times K).
$$
The functions $l_1,\dots,l_{n-1}$ are integer-valued on the subgroup $G$.
\end{lemma}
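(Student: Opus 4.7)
The plan is to build on the preceding lemma, which already achieves reduction modulo $\Lambda_\R$-conjugation, and further to reduce modulo $K$-conjugation so that the $\Lambda_\R$-component lands in the Weyl chamber $\Lambda_\R^+$. For the existence claim in part~(1), given $(v,p)\in G_\R$ the previous lemma produces $(v',p)\in G_\R^+$ in its $\Lambda_\R$-conjugacy class. Since $\Lambda_\R^+$ is a fundamental domain for the $K$-action on $\Lambda_\R$, I pick $k\in K$ with $k(v')\in\Lambda_\R^+$ and conjugate by $(0,k)$; the resulting element $(k(v'),kpk^{-1})$ lies in $\Lambda_\R^+\times K$ and remains in $G_\R^+$ by the $K$-stability asserted in the previous lemma.

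For the uniqueness assertion, suppose $(a,k)$ and $(a',k')$ in $G_\R^+\cap(\Lambda_\R^+\times K)$ are conjugate in $G_\R$, say by $(x,q)$. I factor $(x,q)=(x,1)(0,q)$: conjugation by $(0,q)$ sends $(a,k)$ to $(q(a),qkq^{-1})\in G_\R^+$ (again by $K$-stability), and the residual conjugation by $(x,1)\in\Lambda_\R$ must carry this element to $(a',k')$. Since both sides lie in $G_\R^+$, the previous lemma's uniqueness of $\Lambda_\R$-conjugacy representatives forces $(q(a),qkq^{-1})=(a',k')$. Now $a,a'\in\Lambda_\R^+$ and $\Lambda_\R^+$ is a strict fundamental domain for $K$ on $\Lambda_\R$, so $a=a'=q(a)$ and $k'=qkq^{-1}$ with $q(a)=a$, as required.

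For part~(3) the functions $l_j$ are then forced: setting $l_j(a,k):=\al_j(a)$ on $G_\R^+\cap(\Lambda_\R^+\times K)$ gives a well-defined function on conjugacy classes by part~(2), and part~(1) lets me extend it uniquely to a conjugation-invariant function on $G_\R$. For the integrality in part~(4), I note that for $p\in K=\Per(n)$ the eigenspace $\Eig(p,1)$ consists exactly of vectors in $\R^n$ that are constant on the $p$-orbits, and orthogonal projection onto it replaces each coordinate by the mean of its orbit. Applied to an integer representative of $v\in\Lambda$, this yields rational coordinates whose common denominator divides the least common multiple of the orbit sizes and hence divides $n!$. Permutation by $k\in K$ preserves this rationality, so each difference $a_j-a_{j+1}$ has denominator dividing $n!$, and the explicit factor $n!$ built into $\al_j(x)=n!(x_j-x_{j+1})$ turns $\al_j(a)$ into an integer.

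The main technical nuisances are bookkeeping ones: first, projection of an integer representative is only well-defined modulo $\Delta$, but this is harmless because $\Delta\subset\Eig(p,1)$ for every $p\in K$; and second, the general conjugation formula in the semidirect product has to be unwound so that the factorization $(x,q)=(x,1)(0,q)$ genuinely splits the conjugation into a $K$-part followed by a $\Lambda_\R$-part. No deep obstruction arises; the preceding lemma together with the fact that $n!$ absorbs all denominators coming from averaging over permutation orbits do all the work.
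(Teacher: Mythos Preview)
Your proposal is correct and follows essentially the same route as the paper: reduce to $G_\R^+$ via the preceding lemma, then $K$-conjugate into $\Lambda_\R^+\times K$; for uniqueness the paper computes the conjugation formula directly and shows $v-k'(v)\in\Eig(k',1)\cap\Eig(k',1)^\perp=0$, whereas you equivalently factor the conjugating element and invoke the uniqueness of $\Lambda_\R$-representatives from the previous lemma; for integrality the paper reduces to a single cycle and computes $v-(1-p)(a)=\frac1k(1,\dots,1,0,\dots,0)$, which is exactly your orbit-averaging observation spelled out in coordinates.
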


\begin{proof}
First, by Lemma \ref{lem2.1}, $G_\R^+$ is a set of representatives with respect to $\Lambda_\R$-conjugation, which is $K$-stable. As every element of $G_\R^+$ is $K$-conjugate to an element of $\Lambda_\R^+\times K$, the first claim follows.
Now let $(a,k),(a',k')\in G_\R^+\cap (\Lambda_\R^+\times K)$ be conjugate, say $(a',k')=(v,p)(a,k)(v,p)^{-1}$.
Then
$$
(a',k')=(p(a)+v-pkp^{-1}(v),pkp^{-1}).
$$
Since $k(a)=a$, it follows $k'(p(a))=p(a)$, i.e., $p(a)\in\Eig(k',1)$.
As $a'\in\Eig(k',1)$ we get $v-pkp^{-1}(v)=v-k'(v)\in\Eig(k',1)$.
This can only be if $v-k'(v)=0$, so $a'=p(a)$.
But as $a,a'$ are both in $\Lambda_\R^+$, it follows that $a=a'$ as claimed.

Finally, we need to show that $l_j(x)$ is integral for $x\in G$.
Write $x=(v,p)$ with $v\in\Z^n$.
Then, as shown in the proof of Lemma \ref{lem2.1}, there exists $a\in\R^n$ such that $v+(1-p)(a)\in\Eig(p,1)$.
We have to show that this vector lies in $\frac1{n!}\Z^n$.
For this write $p$ a product of disjoint cycles, after changing the numeration one may assume
$$
p=(1,\dots,k_1)(k_1+1,\dots,k_2)\cdots(k_{r-1}+1,\dots,k_r).
$$
A basis of $\Eig(p,1)$ is given by
$$
v_1=(\underbrace{1,\dots,1}_{k_1-\text{times}},0\dots,0),\dots,v_r=(0,\dots,0,\underbrace{1,\dots,1}_{k_r-k_{r-1}-\text{times}}).
$$
We claim that one can find $a\in\R^n$ such that 
$$
v-(1-p)(a)\in \frac1{k_1}\Z v_1\oplus\dots\oplus\frac1{k_{r}-k_{r-1}}\Z v_r.
$$
We start with $v=e_1$ the first standard basis vector.
In this case the vector $a=(0,-\frac{k_1-1}{k_1},-\frac{k_1-2}{k_1},\dots,-\frac 1{k_1},0\dots,0)$ will do the job.
If $v=e_j$ for some $1\le j\le k_1$, then $v=p^{j-1}e_j$, so one can take the vector $a$ as before and apply $p^{j-1}$ to get a vector doing the job.
If $v$ is a $\Z$-linear combination of these $e_j$, then $a$ can be taken as the corresponding linear combination of the vectors one gets for the $e_j$'s.
The other coordinates are treated in the same way and the claim follows.
\end{proof}

For $u\in\C^{n-1}$ and $x\in G$ we write
$$
u^{l(x)}=u_1^{l_1(x)}\cdots u_{n-1}^{l_{n-1}(x)}.
$$

for $u\in\C^{n-1}$ let 
$$
\norm u_{\max}=\max(|u_1|,\dots,|u_{n-1}|).
$$

\begin{theorem}[Several variable Selberg type zeta function]
The infinite sum
$$
S_\Ga(u)=\sum_{[\ga]}\#(\Ga_\ga\bs G_\ga)\, u^{l(\ga)}
$$
converges localy uniformly for $\norm u_{\max}<1$ to a rational function in $u$. There exist $p_1,\dots,p_k\in\T^{n-1}$ and a polynomial $Q(u)$ such that
$$
S_\Ga(u)=\frac{Q(u)}{\prod_{i=1}^k\prod_{j=1}^{n-1}(u_j-p_{i,j})}.
$$
\end{theorem}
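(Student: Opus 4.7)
The plan is to apply the trace formula to a test function $f_u\in\ell^1(G)$ whose orbital integrals are exactly the monomials $u^{l(\ga)}$, and then to read off the rationality of $S_\Ga(u)$ from the spectral side.

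Concretely, I would fix a transversal $\Omega\subset G$ for the $G$-conjugacy classes and set $f_u(\ga)=u^{l(\ga)}$ on $\Omega$ and zero otherwise. Since $l$ is a class function and each $G$-orbit meets $\Omega$ in a single point, $\CO_\ga(f_u)=u^{l(\ga)}$ for every $\ga\in G$. Lemma \ref{lem2.3} supplies, for each $G_\R$-orbit, a representative $(a,k)$ with $a\in\Lambda_\R^+\cap\Eig(k,1)$ and $\al(a)\in\Z_{\ge 0}^{n-1}$; only boundedly many $G$-orbits lie over a single $G_\R$-orbit (bounded in terms of $|K|$ and the finite quotient $(\Eig(k,1)^\perp\cap\Lambda)/(1-k)\Lambda$), so
$$
\sum_{\ga\in\Omega}|u^{l(\ga)}|\ \le\ C_n\sum_{m\in\Z_{\ge 0}^{n-1}}\prod_j|u_j|^{m_j}\ =\ \frac{C_n}{\prod_j(1-|u_j|)},
$$
giving $f_u\in\ell^1(G)$ and local uniform absolute convergence of $S_\Ga(u)$ on the polydisk $\norm u_{\max}<1$. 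The trace formula then yields
$$
S_\Ga(u)\ =\ \sum_{\pi\in\what G}N_\Ga(\pi)\tr\pi(f_u),
$$
and since $\Ga$ has finite index in $G$, the space $L^2(\Ga\bs G)$ is finite-dimensional, so only finitely many $\pi=\pi_{\chi,\sigma}$ contribute.

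The rationality then reduces to showing that each $\tr\pi_{\chi,\sigma}(f_u)$ is rational of the required form. Partition $\Omega=\bigsqcup_i\Omega_i$ by the $K$-conjugacy class of the $K$-part of the representative, with $k_1,\dots,k_r$ fixed representatives, and within $\Omega_i$ decompose $v=v^{k_i}+v^\perp$ along the orthogonal splitting $\Lambda_\R=\Eig(k_i,1)\oplus\Eig(k_i,1)^\perp$. The $G$-classes with $K$-part conjugate to $k_i$ are parameterized by $v^{k_i}$ in the positive cone together with a finite residue of $v^\perp$ modulo $(1-k_i)\Lambda$. The induced-representation formula for $V_{\chi,\sigma}=\Ind_{K_\chi}^K V_\sigma$ presents $\tr\pi_{\chi,\sigma}((v,k_i))$ as a finite linear combination of characters $\chi'(v)=\chi'(v^{k_i})\chi'(v^\perp)$ with $\chi'\in K\cdot\chi$. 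After carrying out the finite sum over residues of $v^\perp$, what remains is a multivariate geometric series over the integer points of the simplicial Weyl sub-cone $\Eig(k_i,1)\cap\Lambda_\R^+$; choosing generators $e_s$ of this cone dual to the functionals $\al_j$, the series factors as $\prod_s(1-u^{\al(e_s)}\chi'(e_s))^{-1}$, each factor a polynomial in a single $u_j$ (of degree $n!$) whose roots lie on $\T$. Clearing denominators over the finitely many pairs $(\pi,k_i)$ produces the stated form $Q(u)/\prod_{i=1}^k\prod_{j=1}^{n-1}(u_j-p_{i,j})$ with $p_{i,j}\in\T$.

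The hardest step is the cone analysis in the previous paragraph: one must identify $\Eig(k,1)\cap\Lambda_\R^+$ as a face of $\Lambda_\R^+$ for each $k$, control the finite residue group $(\Eig(k,1)^\perp\cap\Lambda)/(1-k)\Lambda$ responsible for the $v^\perp$-sum, and choose integral generators of each face dual to the $\al_j$ so that every factor $1-u^{\al(e_s)}\chi'(e_s)$ depends on a single coordinate $u_j$. This last property is what forces the denominator of $S_\Ga(u)$ to split as $\prod_{i,j}(u_j-p_{i,j})$ with each factor linear in a single variable, rather than involving genuinely multivariate factors.
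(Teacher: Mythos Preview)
Your overall strategy matches the paper's exactly: define $f_u$ supported on a transversal, prove $f_u\in\ell^1(G)$, apply the trace formula, and reduce each $\tr\pi(f_u)$ to a finite sum of multivariate geometric series over lattice points in simplicial cones. The $\ell^1$-estimate and the finiteness of the spectral side go through as you describe.

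The gap is in your parametrization of conjugacy classes. Partitioning $\Omega$ by the $K$-conjugacy class of the $K$-part and then asserting that the classes in $\Omega_i$ are indexed by $v^{k_i}\in\Eig(k_i,1)\cap\Lambda_\R^+$ (times a finite residue for $v^\perp$) does not capture all classes. For instance, with $n=3$ and $k_i=(12)$, the class of $((1,0,0),(23))$ has $K$-part $K$-conjugate to $(12)$, yet its only representative with $K$-part equal to $(12)$ and $\Lambda$-part in $\Eig((12),1)$ is $((0,0,1),(12))$, which lies outside $\Lambda_\R^+$. The reason is that $\Eig(k,1)\cap\Lambda_\R^+$ is the closure of a face that depends on $k$ itself, not merely on its $K$-conjugacy class; the correct fundamental domain for $Z_K(k_i)$ acting on $\Eig(k_i,1)$ is a union of several such cones, on each of which $l$ is given by a \emph{different} linear functional. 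The paper circumvents this by reversing the bookkeeping: it first partitions by the face $\Lambda_S^+$ containing the $\Lambda$-part of the canonical representative, and only then, for fixed $S$, by the $K_S$-conjugacy class of the $K$-part (Lemma~\ref{lem2.5}). With that ordering each inner sum runs over the lattice points of a single open face, and a rational-cone lemma produces generators $a_j$ with $\al_i(a_j)=0$ for $i\ne j$, yielding the single-variable denominator factors you anticipated.
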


\begin{proof}
Let $R\subset G$ be any set of representatives of $G$ modulo conjugation.
Define a function $f_u$ on $G$ by
$$
f_u(x)=\begin{cases} u^{l(x)}& x\in R,\\
0 & x\notin R.\end{cases}
$$
Here we use the common convention that $0^0=1$.

\begin{lemma}
If $\norm u_{\max}=\max(|u_1|,\dots,|u_{n-1}|)<1$, then $f_u\in \ell^1(G)$.
\end{lemma}

\begin{proof}
By Lemma \ref{lem2.3} it suffices to show that we have
$$
\sum_{x\in\frac1{n!}\Lambda}|f_u(x)|<\infty,
$$
where $\frac1{n!}\Lambda=\frac1{n!}\Z^n/\Delta$.
Modulo the diagonal $\Delta$, every $x\in \R^n$ can be assumed to have $x_n=0$.
Then the sum is
$$
\sum_{\stack{x\in\Z^{n-1}}{x_1\ge x_2\ge\dots\ge x_{n-1}\ge 0}}|u_1|^{x_1-x_2}\cdots |u_{n-2}|^{x_{n-2}-x_{n-1}}|u_{n-1}|^{x_{n-1}}.
$$
If all $|u_j|$ are $\le q$ for some $0<q<1$, then each summand is less that $q^{x_1}$. So we have to show that for $q<1$
one has
$
\sum_{k=0}^\infty c_k q^k<\infty,
$
where $c_k$ is the number of tuples  of integers with $k\ge x_2\ge\dots\ge x_{n-1}\ge 0$ which is $\le (k+1)^{n-1}$, whence the claim. 
\end{proof}

We want to plug $f_u$ into the trace formula.
As either side of the trace formula is invariant under conjugation, neither depends on the choice of $R$.
We give a special choice for our computations.

Let $S$ be a partition of $n$, i.e., $S$ is a tuple $(n_1,\dots,n_r)$ of natural numbers with 
$n=n_1+\dots+n_r$.
Then set
$$
\Lambda_S^+=\left\{ \(t_1e(n_1),\dots,t_re(n_r)\)\in\R^n/\Delta: t_1>t_2>\dots>t_r\right\},
$$
where $e(m)=(1,\dots,1)\in\R^m$.
For $x\in \Lambda_S^+$ we write $x=[t_1,\dots,t_r]_S$ for these coordinates.
We define
$$
\Lambda_{S,\frac1n\Z}^+=\left\{\left[\frac{k_1}{n_1},\dots,\frac{k_r}{n_r}\right]_S:k_1,\dots,k_r\in\Z\right\}.
$$ 
Let $K_S$ denote the pointwise stabilizer of $\Lambda_S^+$ in $K$ and fix a set $R_S\subset K_S$ of representatives of $K_S$ modulo conjugation.

If $E\subset F$ is an extension of groups, the $F$-conjugacy class of an element $e$ of $E$ intersected with $E$, may decompose into several $E$-conjugacy classes.
In the following Lemma we show that this is not the case for the extension $G\subset G_\R$.

\begin{lemma}\label{lem2.5}
Every $x\in G$ is in $G_\R$ conjugate to a unique element of 
$$
\bigcup_S \Lambda_{S,\frac1n\Z}^+\times R_S.
$$
Every element of this set is $G_\R$-conjugate to an element of $G$.
If $x,y\in G$ are $G_\R$-conjugate, then they are $G$-conjugate.
\end{lemma}

\begin{proof}
The first two statements are proven similar to Lemma \ref{lem2.3}.
For the third suppose $x=(a,k)$ and $y=(a',k')$ are 
$G_\R$-conjugate, i.e., there exists $(v,p)\in G_\R$ 
with $(a',k')=(v+p(a)-pkp^{-1}(v),pkp^{-1})$.
Then $v-k'(v)$ lies in the intersection of $\Eig(k',1)^\perp$ and $\Z^n$.
Writing $p$ as a product of disjoint cycles as in the proof of Lemma \ref{lem2.3} one sees that there exists $w\in\Z^n$ such that $v-k'(v)=w-k'(w)$.
\end{proof}

Let now $\pi\in\what G$.
Then $\pi$ is finite-dimensional and
$$
\tr\pi(f_u)=\sum_{x\in R}u^{l(x)}\tr\pi(x).
$$
Since the functions $l_1,\dots,l_{n-1}$ are defined on $G_\R$ and are conjugation-invariant, we may, in the computation assume that $R$ is equal to the set in Lemma \ref{lem2.5}, although this set is not contained in $G$.
In the expression $\pi(x)$ for $x\in R\smallsetminus G$ we then replace $x$ with any $G_\R$-conjugate inside $G$.
We then compute
$$
\tr\pi(f_u)=\sum_S\sum_{a\in \Lambda_{S,\frac1n\Z}^+}u^{l(a)}\sum_{k\in R_S}\tr\pi(ak).
$$
Let $V_\pi= V_{\pi,1}\oplus\dots\oplus V_{\pi,m}$ be the decomposition into  $\Lambda$-eigenspaces, i.e., each $a\in \Lambda$ acts on $V_{\pi,j}$ as multiplication by $\la_j^a$ for some character $\Lambda\to\T$, $a\mapsto \la_j^a$, where $\T$ is the circle group, i.e., the set of complex numbers of absolute value one.
Then $\tr\pi(f_u)$ equals
$$
\sum_S\sum_{j=1}^m\sum_{a\in \Lambda_{S,\frac1n\Z}^+}u^{l(a)}\la_j^{a}
\underbrace{\sum_{k\in R_S}\tr\(\pi(k)|V_{\pi,j}\)}_{=\mu_j}
$$

\begin{lemma}
Let $V$ denote a $\Q$ vector space of dimension $r\in\N$.
Let $V_\R=V\otimes\R$ and let $C\subset V_\R$ be an open rational sharp cone with $r$ sides, i.e., its closure $\ol C$ does not contain a line and there exist $\al_1,\dots,\al_r\in\Hom(V,\Q)$ such that
$$
C=\{ v\in V_\R: \al_1(v)>0,\dots,\al_r(v)>0\}.
$$
Let $\Sigma\subset V$ be a lattice, i.e., a finitely generated subgroup which spans $V$.
Then there exists a finite subset $F\subset\Sigma$ and $a_1,\dots,a_r\in\Sigma$ such that $C\cap\Sigma$ is the set of all $v\in V$ of the form
$$
v=v_0+k_1a_1+\dots+k_ra_r,
$$
where $v_0\in F$ and $k_1,\dots,k_r\in\N_0$.
The vector $v_0$ and the numbers $k_1,\dots,k_r\in\N_0$ are uniquely determined by $v$.
\end{lemma}

\begin{proof}
For $j=1,\dots,r$ let $a_j\in\Sigma$ be the unique element such that $\al_i(a_j)=0$ for $i\ne j$ and $\al_j(a_j)$ is $>0$ and minimal.
Then $a_1,\dots,a_r$ is a basis of $V$ inside $\Sigma$, hence it generates a sublattice $\Sigma'\subset \Sigma$.
Let $F$ be a set of representatives of $\Sigma/\Sigma'$ which may be chosen such that each $v_0\in F$ lies in $C$, but for every $j=1,\dots,r$ the vector $v_0-a_j$ lies outside $C$.
It is clear that every $v$ of the form given in the lemma is in $C\cap\Sigma$.

For the last statement, let $v\in C\cap \Sigma$. Then there are uniquely determined $v_0\in F$, $k_1,\dots,k_r\in\Z$ such that $v=v_0+k_1a_1+\dots+k_ra_r$.
We have to show that $k_1,\dots,k_r\ge 0$.
Assume that $k_j<0$.
Then
$$
0<\al_j(v)=\al_j(v_0)+k_j\al_j(a_j)\le\al_j(v_0)-\al_j(a_j)=\al_j(v_0-a_j)
$$
and the latter is $\le 0$, as $v_0-a_j$ lies outside $C$, a contradiction!
\end{proof}

We apply this lemma to $V$ being the $\Q$-span of $\Lambda\cap \Lambda_S^+$ and $C=\Lambda_S^+$.
We find that $\tr\pi(f_u)$ equals
\begin{align*}
&\sum_S\sum_{j=1}^m\mu_j \sum_{a_0\in F}\sum_{k_1,\dots,k_r=0}^\infty u^{l(a_0+k_1a_1+\dots+k_ra_r)}\la_j^{a_0+k_1a_1+\dots+k_ra_r}\\
&= \sum_S\sum_{j=1}^m\mu_j \sum_{a_0\in F}u^{l(a_0)}\la_j^{a_0}\frac1{1-\la_j^{a_1}u^{l(a_1)}}\dots\frac1{1-\la_j^{a_r}u^{l(a_r)}},
\end{align*}
where in the last row all sums are finite.
We have shown

\begin{lemma}\label{lem2.8}
For each $\pi\in\what G$, the map $u\mapsto\tr\pi(f_u)$, defined for small $u$, is a rational function in $u$.
\end{lemma}

We now finish the proof of the theorem.
For $\norm u_{\max}<1$ the function $f_u$ goes into the trace formula. This in particular means that the sum
$$
\sum_{[\ga]}\#(\Ga_\ga\bs G_\ga)\CO_\ga(f_u)=S_\Ga(u)
$$
converges locally uniformly.
As the quotient $\Ga\bs G$ is finite, the space $L^2(\Ga\bs G)$ is finite-dimensional, so the sum $\sum_{\pi\in\what G}N_\Ga(\pi)\tr\pi(f_u)$ is a finite sum, i.e., the coefficient $N_\Ga(\pi)$ vanishes for almost all $\pi$.
So $S_\Ga(u)$ is a finite sum of rational functions of the form in Lemma \ref{lem2.8}.
As the representation $\pi$ is unitary, the complex numbers $\la_1,\dots,\la_{m}$ in the lemma are all in $\T$.
The proof of the theorem is finished.
\end{proof}

\section{Geometric interpretation}
In this section we assume  $\Ga$ to be torsion-free.
It follows that  $\Ga$ is the fundamental group of $\Ga\bs\CB$, where $\CB\cong\R^{n-1}$ is the building of $\PGL_n(\Q_1)$.
Then each conjugacy class $[\ga]$ gives a homotopy class of loops in $\Ga\bs\CB$, where a loop is a continuous map $S^1\to\CB$.
The euclidean structure  makes $\CB$ and $\Ga\bs\CB$ a Riemannian manifold, where the geodesics in $\CB$ are straight lines.

\begin{lemma}
Every loop on $\Ga\bs\CB$ is homotopic to a closed geodesic.
\end{lemma}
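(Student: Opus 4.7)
The strategy is to identify free homotopy classes of loops in $\Gamma\bs\CB$ with conjugacy classes in $\Gamma$, and then for each class produce an axis in $\CB\cong\R^{n-1}$.

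First I would check that $\Gamma$ acts freely and properly discontinuously on $\CB$, so that $\CB\to\Gamma\bs\CB$ is the universal cover. Proper discontinuity is automatic since $\Gamma\subset G$ is discrete and $G$ contains the translation lattice $\Lambda$ as a subgroup of finite index. For freeness, I would show that any $(v,p)\in G$ with a fixed point in $\CB$ has finite order: a fixed point means $v=(1-p)(y)$, so $v\in\Eig(p,1)^\perp$, and writing $p$ as a product of disjoint cycles in the style of Lemma \ref{lem2.3} one verifies that $\sum_{i=0}^{\mathrm{ord}(p)-1}p^i$ annihilates $\Eig(p,1)^\perp$, yielding $(v,p)^{\mathrm{ord}(p)}=1$. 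Torsion-freeness of $\Gamma$ then implies freeness, and free homotopy classes of loops in $\Gamma\bs\CB$ correspond bijectively to conjugacy classes in $\Gamma$.

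Second, given a nontrivial class $[\ga]$ with $\ga=(v,p)$, I would construct an axis: a Euclidean line $\ell\subset\CB$ preserved by $\ga$ on which $\ga$ acts by translation. Using the orthogonal splitting $\CB=\Eig(p,1)\oplus\Eig(p,1)^\perp$ from the first lemma of Section 2, decompose $v=v^\parallel+(1-p)(w)$ with $v^\parallel\in\Eig(p,1)$. A direct calculation with $p(v^\parallel)=v^\parallel$ gives $\ga(w+tv^\parallel)=w+(t+1)v^\parallel$, so $\ga$ stabilizes $\ell:=w+\R v^\parallel$ and translates it by $v^\parallel$. Torsion-freeness forces $v^\parallel\ne 0$, for otherwise $v\in\Eig(p,1)^\perp$ and Step 1 would exhibit $\ga$ as a torsion element.

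Third, projecting $\ell$ to $\Gamma\bs\CB$ identifies each $y_0\in\ell$ with $\ga y_0=y_0+v^\parallel$, so the image is a closed curve, and since $\ell$ is a Euclidean straight line the image is a closed geodesic. Under the standard isomorphism $\pi_1(\Gamma\bs\CB,[y_0])\cong\Gamma$ this geodesic loop represents $\ga$, placing it in the prescribed free homotopy class. For $\ga=1$ the claim is trivial, any constant path being a degenerate closed geodesic.

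The only subtle point is the nonvanishing of $v^\parallel$ in Step 2: this is precisely where torsion-freeness of $\Gamma$ enters, and it is indispensable, as without it the axis would degenerate to a point and the homotopy class could contain no honest closed geodesic. Everything else is the classical axis argument for flat manifolds, translated into the semi-direct product language of Section 2.
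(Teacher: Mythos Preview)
Your proof is correct and complete. The paper's own argument is considerably terser: it notes that torsion-freeness forces $\gamma$ to have no fixed point, and then invokes the standard fact that the minimal displacement set
\[
P_\gamma=\{x\in\CB:d(x,\gamma x)\text{ is minimal}\}
\]
of a fixed-point-free Euclidean isometry is a nonempty union of straight lines on which $\gamma$ acts by translation; each such line projects to the desired closed geodesic.

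The difference is one of style rather than substance. The paper appeals to the metric picture (displacement function, min-set), while you unpack the same phenomenon algebraically via the eigenspace decomposition $v=v^{\parallel}+(1-p)(w)$ and the explicit verification $\gamma(w+tv^{\parallel})=w+(t+1)v^{\parallel}$. Your approach has the advantage of being entirely self-contained within the semidirect-product language already set up in Section~2, and it makes transparent exactly where torsion-freeness enters (forcing $v^{\parallel}\ne0$). The paper's version is shorter but presupposes familiarity with the structure theory of affine isometries of Euclidean space. Both routes yield the same axis; indeed your line $w+\R v^{\parallel}$ lies in the paper's $P_\gamma$.
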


This is a well known property of compact Riemann manifolds, see for instance \cite{Gallot} 2.98.
For the convenience of the reader we include a proof in this special situation, which also works if $\Ga\bs\CB$ is non-compact and introduces some notation which will be needed later.

\begin{proof}
Take a loop in $\Ga\bs\CB$ and lift it to a continuous path on $\CB$ of infinite length, which is closed by some non-trivial $\ga\in\Ga$. This means that the path is $\ga$-stable and $\ga$ acts on it by translation, i.e., the path is given by a continuous $c:\R\to\CB$ such that $\ga c(t)=c(t+1)$.
The element $\ga$ acts on $\CB\cong\R^{n-1}$ as an affine motion.
The affine subspace $P_\ga=\{ x\in\CB: d(x,\ga x)\text{ is minimal}\}$ is $\ga$-stable and $\ga$ acts as a translation by some vector in $P_\ga$ followed by a linear map on the orthogonal space $P_\ga^\perp$.
From this it becomes clear that $c$ may, modulo homotopy through $\ga$-stable maps, be assumed affine and then moved inside $P_\ga$, again by a $\ga$-stable homotopy.
As $\ga$ is a translation on $P_\ga$, the claim follows.
\end{proof}

The group $\Ga$ is called a \e{translation group}, if $\Ga\subset \Lambda$.
Since $\Lambda$ has finite index in $G$, every $\Ga$ contains a finite-index translation subgroup.

We say that a geodesic in $\CB$ or $\Ga\bs\CB$ is a \e{rational geodesic}, if it is contained in the 1-skeleton $\CB_1$ or $(\Ga\bs \CB)_1$.

\begin{lemma}
If the homotopy class attached to a given class $[\ga]$ contains a rational geodesic then one has $l_j(\ga)=0$ for all but one $j\in\{ 1,\dots,n-1\}$.

Conversely, if $l_j(\ga)=0$ for all but one $j\in\{ 1,\dots,n-1\}$, then the homotopy class attached to some power $\ga^k$ of $\ga$ contains a rational geodesic.
The minimal number $k$ as above  is $\le n$ and if $\Ga$ is a translation group, one always has $k=1$.
\end{lemma}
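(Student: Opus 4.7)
The plan is to treat the two implications separately and handle the bound on $k$ at the end.

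For the forward direction, suppose the class $[\ga]$ contains a rational geodesic. By the preceding lemma it is the unique closed geodesic in its free homotopy class, whose lift to $\CB$ is a straight line $\ell\subset P_\ga$ lying in the $1$-skeleton $\CB_1$. Every edge of $\CB$ has the form $[L,L\cdot\tau^{\mathbf 1_S}]$ for a proper nonempty $S\subset\{1,\dots,n\}$, with direction vector $\xi_S=\sum_{i\in S}e_i$ modulo $\Delta$, so a straight segment inside the 1-skeleton must be parallel to one such $\xi_S$. Writing $\ga=(w,p)$, along $P_\ga$ the element $\ga$ translates by the projection $v$ of $w$ onto $\Eig(p,1)$, so $v$ is a positive multiple of $\xi_S$ up to a $K$-permutation of coordinates. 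In the canonical form with $v\in\Lambda_\R^+$ this forces $v=(s_1,\dots,s_1,s_2,\dots,s_2)$ with $|S|$ copies of the larger value $s_1>s_2$, hence $\al_j(v)=n!(v_j-v_{j+1})=0$ for every $j\neq|S|$, i.e.\ $l_j(\ga)=0$ for all but one $j$.

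For the converse, suppose $l_j(\ga)=0$ except at $j=k$ and let $(v,q)$ be the canonical form. Then $v=(s_1,\dots,s_1,s_2,\dots,s_2)$ with $k$ copies of $s_1>s_2$, and $q(v)=v$ means $q$ preserves the partition into the first $k$ and the last $n-k$ coordinates. The coordinates of $v$ on each $q$-cycle $C$ are of the form $w_C/\ell_C$ (as extracted from the proof of Lemma~\ref{lem2.3}), and since the cycle length $\ell_C$ divides $|q|$, the vector $|q|v$ has integer entries; modulo $\Delta$ it equals an integer multiple of $\xi_k$. Combining this with the identity $(1+q+\cdots+q^{|q|-1})v=|q|v$, valid because $qv=v$, gives $(v,q)^{|q|}=(|q|v,1)$, so $\ga^{|q|}$ is a pure translation of $\CB$ along the edge direction $\xi_k$ by a vertex-to-vertex displacement. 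The line $\{t\xi_k:t\in\R\}$ through the vertex $[L_0]$ lies in $\CB_1$, and its image in $\Ga\bs\CB$ is the rational closed geodesic representing $[\ga^{|q|}]$.

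If $\Ga\subset\Lambda$ is a translation group, then the permutation component of every element is trivial, so $|q|=1$, $v=w\in\Lambda$ and the hypothesis forces $w\in\Z\xi_k$: $\ga$ itself is the rational geodesic and one has $k=1$. For general $\Ga$ the argument just produces the bound $k\leq|q|$, and refining this to the stated bound $k\leq n$ is what I expect to be the hard part. One has to examine intermediate powers $\ga^m$ for $m\mid|q|$ with $m\leq n$, determine whether $P_{\ga^m}$ already meets a vertex of $\CB$, and check that the axis-translation $(W_m)_\parallel$ is simultaneously an integer multiple of $\xi_k$ modulo $\Delta$. The decisive quantity is the fractional part of the transverse displacement $x_\perp^0(m)$ relative to the projection lattice of $\Lambda$ onto $\Eig(q^m,1)^\perp$, which is controlled cycle-by-cycle by the explicit cycle structure of $q$.
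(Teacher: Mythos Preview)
Your forward direction is correct and essentially identical to the paper's: you pick a lift of the rational geodesic, observe that $\ga$ acts as $x\mapsto Fx+t$ with $F(t)=t$, and note that an edge direction, after $K$-conjugation into $\Lambda_\R^+$, lies in a one-dimensional face, forcing $l_j(\ga)=0$ for all but one $j$. (A small quibble: the preceding lemma does not give a \emph{unique} closed geodesic---$P_\ga$ is a union of parallel lines---but you never use uniqueness.)

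For the converse you also follow the paper's line: write the canonical form $(v,q)$ with $q(v)=v$, raise $\ga$ to the order $|q|$ of its permutation part, obtain a pure translation along a one-dimensional face direction, and deduce a rational closed geodesic. Your verification that $|q|v$ is a lattice vector is more careful than the paper, which simply says ``$\ga^k$ is a translation''. The translation-group case is fine.

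The genuine gap is exactly the one you flag: passing from $k\le |q|$ to $k\le n$. The paper does \emph{not} carry out the intricate analysis you sketch (intermediate divisors of $|q|$, fractional transverse displacements, cycle-by-cycle bookkeeping). Instead it disposes of the bound in a single clause: ``Let $k$ be the order of $F$ in $K=\Per(n)$, then $k$ is a divisor of $n$.'' That is the entire argument in the paper for $k\le n$. You should be aware, however, that this assertion is not obviously true: since $q(v)=v$ with $v$ in a one-dimensional face, one only knows $q\in\Per(j_0)\times\Per(n-j_0)$, and such permutations can have order exceeding $n$ (e.g.\ a $2$-cycle times a $3$-cycle in $\Per(5)$ has order $6$). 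So while your proposal leaves the bound unproved, the paper's proof of this point is a bare claim that appears to need further justification; your instinct that something nontrivial is happening here is sound.
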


\begin{proof}
Suppose that the homotopy class given by $[\ga]$ contains a rational geodesic $c$ in $\Ga\bs\CB$.
This means that there exists a lift $\tilde c\in\CB_1$  and $\ga$ induces a translation on $\tilde c$. 
Since $\ga$ preserves the affine structure on $\CB$, we may choose the origin in a vertex on the rational geodesic $\tilde c$, or, which amounts to the same, conjugate $\ga$ by an element on $\Lambda$.
Then $\ga$ maps $x\in\CB$ to $Fx+t$, where $F$ is linear with $F(t)=t$, so $\ga\in G_\R^+$.
Conjugating by some $k\in K$, we may assume $t\in \Lambda_\R^+$.
As $\tilde c$ is rational, $t$ lies in a $1$-dimensional face of $\Lambda_\R^+$, which is equivalent to saying $l_j(\ga)=0$ for all but one $j$.

For the converse direction, assume $\ga x=F(x)+t$ with $F(t)=t$.
Let $k$ be the order of $F$ on $K=\Per(n)$, then $k$ is a divisor of $n$ and $\ga^k$ is a translation.
As $t$ lies in a one-dimensional face of $\Lambda_\R^+$, $\ga$ closes a rational geodesic.
\end{proof}

\section{An Ihara type formula}\label{SecIhara}
The Ihara zeta function of a finite, $q+1$ regular graph $X$ is defined as the infinite product
$$ 
Z_X(u)=\prod_{c}(1-u^{l(c)})^{-1},
$$
where the product runs over all backtrackingless closed cycles $c$ and $l(c)$ denotes the length.
The Ihara formula asserts that when $|u|$ is small enough, $Z_X(u)^{-1}$ converges to a polynomial so that
$$
Z_X(u)^{-1}=\det(1-Au+qu^2)(1-u^2)^{-\chi},
$$
where $A$ is the adjacency operator and $\chi$ is the Euler-characteristic (which is always non-positive).
It has been proven in ascending order of generality in 
\cites{Ihara,Hashimoto,Sunada,Bass}.
For higher dimensional buildings, the question for a generalized Ihara formula is still open. For the $\PGL_3$-case see \cites{KL,KangLi}.

Fix the following set of generators of $\Lambda$, 
$$ 
S=\{[a_1,\cdots,a_n] \in \Lambda, \max_{1\leq i,j \leq n}\{|a_i-a_j|\}=1 \}.
$$
As the set $S$ has $2^n-2$ elements and the group $\Lambda$ is infinite, the Cayley graph $X$ of $(\Lambda,S)$ is a $(2^n-2)$-regular infinite graph. We shall show that the Cayley graph $X$ is isomorphic to the $1$-skeleton of the building $\CB$ of  $\PGL_n(\Q_1)$ as graphs.

Recall that $\PGL_n(\Q_1) = \Lambda \rtimes \Per(n)$ acts transitively on  the set of vertices $\CB_0$ of its building and $\Per(n)$ is the stabilizer of the vertex $[L_0]$ where $L_0$ is the lattice spanned by the standard basis $e_1,\cdots,e_n$. Therefore, for $[a_1,\cdots,a_n] \in \Lambda$, 
$$ [a_1,\cdots,a_n] \mapsto [\langle \tau^{a_1}e_1,\cdots, \tau^{a_n}e_n \rangle]$$ defines a bijection between $\Lambda$ and $\CB_0$. Moreover, by definition, two vertices  $[\langle \tau^{a_1}e_1,\cdots, \tau^{a_n}e_n \rangle]$ and 
$[\langle \tau^{b_1}e_1,\cdots, \tau^{b_n}e_n \rangle]$
 form an edge if there is some integer $t$ so that 
 $a_i \leq b_i+t \leq a_i+1$ for all $i$, which is equivalent to that $[a_1,\cdots,a_n]$ and $[b_1,\cdots,b_n]$ differ by some element of $S$. Therefore, the above bijection indeed gives a graph isomorphism between the Cayley graph $X$ and the 1-skeleton $\CB_1$ of $\CB$. For convenience, we identify $X$ with $\CB_1$ in the rest of the paper.

Now for $a=[a_1,\cdots,a_n] \in \Lambda$, define its type $\tau(a)$ as $a_1+\cdots+a_n \mod n$. 
Fix a subgroup $\Gamma$ of $\Lambda$ of finite index $N$. Then the quotient of $X$ by $\Gamma$, denoted by $X_\Gamma$, is the Cayley graph $(\Lambda/\Gamma,S)$ of $N$ vertices; it can be considered
as the 1-skeleton of the $(n-1)$-dimensional simplicial torus $\CB/\Ga$, denoted by $\CB_\Ga$. We shall assume that $\Gamma$ only contains type zero elements so that the type is well-defined on $\Lambda/\Gamma$. 

In this case, the Ihara formula states that
$$ 
Z(X_\Gamma, u)^{-1} = \det(I_N-Au+(2^n-3)u^2I_N)(1-u^2)^{-\chi(X_\Gamma)}$$
which only encodes the information of the 1-skeleton $ X_\Ga$ but not the whole complex $\CB_\Ga$. Therefore, we shall study a different kind of zeta function which encodes more information about $\CB_\Ga$.

For a function $f:\Lambda/\Gamma \to \C$, define 
$$ A_i f( g \Gamma )= \sum_{s \in S, \tau(s)=i} f(s g \Gamma),$$
call $A_i$ the type $i$ adjacency operator of $X_\Gamma$. 


A path $(v_0,\cdots,v_n)$ on the graph $X$ is called a \emph{geodesic}, if it is a straight line in $\R^n$. As there are only finitely many directions of the latter which lie in the 1-skeleton, if we defined a zeta function as the product over all closed geodesics in $X_\Ga$, this zeta function would be a finite product of functions, each of which is attached to one direction.
We instead consider a zeta function corresponding to one direction only.
A geodesic $(v_0,\dots,v_n)$  is called a  \emph{positive geodesic} if it is geodesic in $\R^n$ and $\tau(v_{i+1})=\tau(v_i)+1$ for $i$.
A closed path $c$ in $X_\Gamma$ is a positive geodesic if its lifting in $X$ is a positive geodesic; $c$ is primitive if $c$ is not equal to a shorter path repeated several times.
Two closed paths in $X_\Gamma$ are equivalent if one can be obtained from the other by changing the starting vertex. Denote the equivalence class of a closed path $c$ by $[c]$.
In this paper, we shall consider the following zeta function on $X_\Gamma$
$$ Z_+(u)=Z_{+}(X_\Gamma, u) = \prod_{[c]} (1-u^{l(c)})^{-1}
$$
where $[c]$ runs through all equivalence classes of primitive positive closed geodesics in $X$. 
We consider this particular zeta function, because it is the smallest building block of a zeta function counting closed geodesics in the 1-skeleton and because of the following results which generalize Ihara's Identity, relate this zeta function to Langlands L-functions and to the group-theoretic several variable function $S_\Ga$.

First, we will show that
\begin{theorem} \label{theorem1}
The infinite product $Z_{+}(X_\Gamma,u)^{-1}$ converges to a polynomial which can be expressed as 
$$ Z_{+}(X_\Gamma, u)^{-1} = \det(I_N -A_1u +\cdots+ (-1)^{n-1}A_{n-1}u^{n-1}+(-1)^n u^n I_N).$$
\end{theorem}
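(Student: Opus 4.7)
The plan is to show that both sides of the identity coincide with $\prod_{j=1}^n\det(I_N-uT_j)$, where $T_j$ denotes translation by the basis vector $e_j$ acting on $\ell^2(\Lambda/\Ga)$. For the right-hand side I would first identify the operators $A_i$ with elementary symmetric polynomials in $T_1,\dots,T_n$. Every element of $S$ has the form $e_T=\sum_{j\in T}e_j$ for a nonempty proper subset $T\subset\{1,\dots,n\}$ with $\tau(e_T)=|T|\bmod n$, so commutativity of the translations gives $A_i=\sum_{|T|=i}\prod_{j\in T}T_j=e_i(T_1,\dots,T_n)$ for $1\le i\le n-1$. Moreover $e_n(T_1,\dots,T_n)=T_1\cdots T_n$ is translation by $e_1+\cdots+e_n\in\Delta$, which vanishes in $\Lambda$, so $e_n(T)=I_N$. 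The standard generating-function identity $\sum_{i=0}^n(-u)^i e_i(T_1,\dots,T_n)=\prod_{j=1}^n(I_N-uT_j)$ then rewrites the operator inside the determinant as a commuting product, yielding $\prod_{j=1}^n\det(I_N-uT_j)$ for the right-hand side. Each $T_j$ has order $m_j$ equal to the order of $e_j$ in $\Lambda/\Ga$, and Pontryagin duality on the finite abelian group $\Lambda/\Ga$ shows that its eigenvalues are precisely the $m_j$-th roots of unity, each with multiplicity $N/m_j$; consequently $\det(I_N-uT_j)=(1-u^{m_j})^{N/m_j}$.

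For the left-hand side I would analyze primitive closed positive geodesics in $X_\Ga$. The requirement $\tau(v_{i+1})=\tau(v_i)+1$ forces each step to be a type-$1$ element of $S$; since $1\le|T|\le n-1$, this means $|T|=1$, so each step lies in $\{e_1,\dots,e_n\}$. Being a geodesic in $\R^n$ means the broken-line path is straight, and because $e_1,\dots,e_n$ are linearly independent in $\R^n$ this forces all step vectors to coincide with one fixed $e_j$. Thus every primitive closed positive geodesic in $X_\Ga$ lifts to a constant-direction ray $v_0,v_0+e_j,v_0+2e_j,\dots$, which closes up precisely when the length is a multiple of $m_j$; primitivity pins it to $m_j$ exactly. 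Cyclic shifts of the starting vertex correspond to sliding along the coset $v_0+\sp{e_j}$, so direction $j$ contributes $N/m_j$ equivalence classes of length $m_j$, and different directions give inequivalent classes because their edge labels differ. Therefore
\[
Z_+(X_\Ga,u)\;=\;\prod_{j=1}^n(1-u^{m_j})^{N/m_j},
\]
which coincides with the right-hand side; in particular the infinite product converges for small $u$ to this polynomial.

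The only genuinely geometric step is the characterization of positive geodesics as constant-direction rays, and it is here that I expect the argument to require the most care: one has to unpack the meaning of ``geodesic in $\R^n$'' in the standard embedding of $X$ and invoke linear independence of $e_1,\dots,e_n$ to rule out any mixing of directions. Everything else is routine linear algebra, keyed to the identity $e_n(T)=I_N$, which is the operator incarnation of $\sum_j e_j=0$ in $\Lambda$, and to the explicit diagonalization of the commuting unitaries $T_j$ on $\ell^2(\Lambda/\Ga)$.
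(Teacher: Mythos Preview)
Your proof is correct and follows essentially the same route as the paper: both arguments identify $A_i$ with the $i$-th elementary symmetric polynomial in the commuting translations $T_j=\lambda(s_j)$, factor the operator polynomial as $\prod_{j=1}^n(I_N-uT_j)$, and match this against $Z_+$ by showing that primitive closed positive geodesics are exactly the constant-direction orbits along a single $e_j$, contributing $(1-u^{m_j})^{N/m_j}$. Your write-up is in fact more careful than the paper's on the geodesic step (the paper simply asserts the form of positive geodesics); just note that since $X$ really sits in $\R^n/\Delta(\R)\cong\R^{n-1}$, the relevant fact is that any \emph{two} distinct $e_i,e_j$ remain linearly independent there, which still forces a straight-line path to use a single direction.
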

\begin{remark}
As the Euler characteristic of the torus $\CB_\Ga$ is zero, we can rewrite the above equation as 
$$ 
Z_{+}(X_\Gamma, u)^{-1} = \frac{\det(I_N-A_1u +\cdots+ (-1)^{n-1}A_{n-1}u^{n-1}+(-1)^n u^n I_N)}{(1-u^n)^{\chi(\CB_\Ga)}}.
$$
\end{remark}

Now given a character 
$\rho:\Lambda\to\C^\times$, define the \emph{Satake parameters} of $\rho$ to be $\rho_j=\rho(e_j)$, where $e_1,\dots,e_n$ is the standard basis of $\Z^n$.
Then $\rho_1\cdots\rho_n=1$ and we define the \emph{Langlands $L$-function} of $\rho$ to be
$$
L(\rho,u)=\prod_{j=1}^n(1-\rho_ju)^{-1}.
$$
Finally, we define the $L$-function of $\Lambda/\Gamma$ as
$$
L(\Lambda/\Gamma,u)=\prod_{\rho\in\widehat{\Lambda/\Gamma}}L(\rho,u).
$$
Note that there is no multiplicity showing up as an exponent to the factor $L(\rho,u)$ as, since $\Lambda/\Gamma$ is an abelian group, characters do not come with mutliplicities other than one.

\begin{theorem} \label{theorem2}
$Z_{+}(X_\Gamma,u) = L(\Lambda/\Gamma,u).$
\end{theorem}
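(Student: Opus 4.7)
The plan is to deduce Theorem \ref{theorem2} directly from Theorem \ref{theorem1} by simultaneously diagonalizing the operators $A_1,\dots,A_{n-1}$ through the characters of the finite abelian group $\Lambda/\Gamma$ and recognizing each resulting determinant factor as the Langlands $L$-factor $L(\rho,u)$.

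The preparatory step is to pin down the generating set $S$ explicitly. An element $[a_1,\dots,a_n]\in\Lambda=\Z^n/\Delta$ satisfies $\max_{i,j}|a_i-a_j|=1$ if and only if, after subtracting a suitable multiple of $(1,\dots,1)$, each $a_i$ lies in $\{0,1\}$ with at least one $0$ and at least one $1$; hence $S$ is in bijection with proper nonempty subsets $T\subset\{1,\dots,n\}$ via $T\mapsto s_T=\sum_{j\in T}e_j$, and the type of $s_T$ equals $|T|\in\{1,\dots,n-1\}$. Since $\Lambda/\Gamma$ is finite abelian, the space $\C[\Lambda/\Gamma]\cong\C^N$ decomposes under the regular representation as an orthogonal sum of one-dimensional character lines $\C\cdot\rho$, $\rho\in\widehat{\Lambda/\Gamma}$, each of which is a simultaneous eigenvector of all the convolution operators $A_i$. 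From the description of $S$ above the eigenvalue of $A_i$ on $\rho$ is
$$
a_i(\rho)=\sum_{|T|=i}\prod_{j\in T}\rho(e_j)=e_i(\rho_1,\dots,\rho_n),
$$
the $i$-th elementary symmetric polynomial in the Satake parameters $\rho_j=\rho(e_j)$.

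Factoring the determinant of Theorem \ref{theorem1} in this simultaneous eigenbasis then gives
$$
Z_+(X_\Gamma,u)=\prod_{\rho\in\widehat{\Lambda/\Gamma}}\Bigl(\,\sum_{i=0}^{n-1}(-1)^i e_i(\rho_1,\dots,\rho_n)\,u^i+(-1)^n u^n\Bigr),
$$
and I would use the relation $\rho_1\cdots\rho_n=\rho(e_1+\cdots+e_n)=1$, valid because $e_1+\cdots+e_n\in\Delta$, to rewrite the trailing $(-1)^n u^n$ as $(-1)^n e_n(\rho)\,u^n$. Each factor then telescopes into $\sum_{i=0}^n(-1)^i e_i(\rho)\,u^i=\prod_{j=1}^n(1-\rho_j u)=L(\rho,u)$, and the product over $\rho$ becomes $L(\Lambda/\Gamma,u)$ by definition.

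Beyond these manipulations the argument is routine, since Theorem \ref{theorem1} is doing the real work. The one point that needs honest care is the combinatorial identification of $S$ with the proper subsets of $\{1,\dots,n\}$, which is precisely what makes the eigenvalues of $A_i$ coincide with the elementary symmetric polynomials; together with the observation $\rho_1\cdots\rho_n=1$ that absorbs the constant-coefficient $u^n$ term into the symmetric pattern, this is what allows the determinant of Theorem \ref{theorem1} to factor exactly as the product of the Langlands $L$-factors, and I expect this bookkeeping to be the only place where a misstep could occur.
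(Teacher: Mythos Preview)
Your argument is correct, but it is organized differently from the paper. You take Theorem \ref{theorem1} as input and diagonalize the determinant over the characters of $\Lambda/\Gamma$, recognizing each factor as $L(\rho,u)$ via the elementary symmetric polynomial identity and the relation $\rho_1\cdots\rho_n=1$. The paper instead proves Theorems \ref{theorem1} and \ref{theorem2} simultaneously from a third, intermediate expression: it observes directly from the definition that every primitive positive closed geodesic is the $\langle s_i\rangle$-orbit of some vertex for one of the basic generators $s_i=e_i$, so that
\[
Z_+(X_\Gamma,u)=\prod_{i=1}^n\det\bigl(I_N-\lambda(s_i)u\bigr),
\]
where $\lambda$ is the regular representation. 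This product is visibly $L(\Lambda/\Gamma,u)$, giving Theorem \ref{theorem2} without invoking Theorem \ref{theorem1}; the same product is then rewritten, via the identity $A_i=\sum_{|S'|=i}\prod_{s\in S'}\lambda(s)$, as the determinant in Theorem \ref{theorem1}. So the paper's route to Theorem \ref{theorem2} is a direct geodesic count and does not rely on Theorem \ref{theorem1}, whereas your route treats Theorem \ref{theorem1} as the hard step and deduces Theorem \ref{theorem2} as a spectral corollary. Both use the same combinatorial core (the identification of $S$ with proper nonempty subsets of $\{1,\dots,n\}$ and the elementary symmetric polynomial factorization), just traversed in opposite directions.
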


\begin{proof}[Proof of Theorem \ref{theorem1} and \ref{theorem2}]
Let $s_i$ be an element of $S$ which has a representative in $\Z^n$ with all coordinates equal to zero except the $i$-th coordinate equal to 1. Set $S_0=\{s_1,\cdots,s_n\},$ then 
$$ S=\left\{ \sum_{s \in S'} s: \mbox{$S'$ is a proper subset of $S_0$} \right\}.$$
Given a vertex $g+\Gamma$ in $X_\Gamma$, each positive geodesic has the form 
$$ g+\Gamma \to (s_i + g)+ \Gamma  \to  (2 s_i +g)+\Gamma \to \cdots.$$
and it is primitive if its length is equal to the order of $s_i$ in $\Lambda/\Gamma$. Denote this order by $m_i$, then the contribution of such kind of positive closed geodesics to the Euler product $Z_+(X_\Ga,u)$ is given by
$$ (1-u^{m_i})^{\frac{N}{m_i}} = \det\left(I_N - \lambda(s_i)u\right).$$
Here $\lambda$ is the  regular representation of $\Lambda/\Gamma$. We conclude that 
$$ Z_+(X_\Gamma,u)^{-1}=\prod_{i=1}^n \det (I_N- \lambda(s_i) u).$$
The right hand side is clearly equal to $L(\Lambda/\Gamma,u)^{-1}$.
On the other hand,
$$ A_i = \sum_{S'\subset S_0, |S'|=i} \lambda \left(\sum_{s \in S'}s \right) = 
\sum_{S'\subset S_0, |S'|=i} \prod_{s \in S'} \lambda (s ),$$
so that
$$
I_N -A_1u +\cdots+ (-1)^{n-1}A_{n-1}u^{n-1}+(-1)^n u^n I_N = \prod_{i=1}^n (I_N- \lambda(s_i) u),
$$
which completes the proof of the two theorems.
\end{proof}

\section{Comparison}

We shall now compare the different types of zeta functions in the following theorem.

\begin{theorem}
If  $\Ga$ is a translation group, then
$$
S_\Ga(x,0,\dots,0)=(n-1)!\frac{Z_+'}{Z_+}(x).
$$
\end{theorem}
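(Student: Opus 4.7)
The plan is to compute both sides explicitly as rational functions of $x$ and match them, exploiting that $\Gamma$ abelian makes the conjugacy sum in $S_\Gamma$ collapse to a sum over elements.

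Since $\Gamma \subset \Lambda$ is abelian, every $\Gamma$-conjugacy class of $\Gamma$ is a singleton. For $\gamma = (v, 1) \in \Gamma$, the identity $(w,p)(v,1)(w,p)^{-1} = (p(v),1)$ computed in $G = \Lambda \rtimes K$ gives $G_\gamma = \Lambda \rtimes K_v$, where $K_v \subset K$ is the permutation-stabilizer of $v$, and $\Gamma_\gamma = \Gamma$. Hence $\#(\Gamma_\gamma \backslash G_\gamma) = [\Lambda:\Gamma]\,|K_v| = N|K_v|$, and
$$
S_\Gamma(u) = \sum_{v \in \Gamma} N|K_v|\, u^{l(v,1)}.
$$
Setting $u = (x, 0, \ldots, 0)$, with the convention $0^0 = 1$, selects those $v$ with $l_j(v,1) = 0$ for all $j \geq 2$. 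By Lemma \ref{lem2.3}, these are exactly the $v$ whose sorted representative in $\Lambda_\R^+$ has the form $(k, 0, \ldots, 0)$ with $k \geq 0$, i.e.\ $v = k s_i$ for some $i \in \{1, \ldots, n\}$ and $k \geq 0$. For $k > 0$ the stabilizer $|K_{k s_i}|$ equals $(n-1)!$, and $k s_i \in \Gamma$ iff $m_i \mid k$, where $m_i$ is the order of $s_i$ in $\Lambda/\Gamma$. Summing the resulting geometric series expresses $S_\Gamma(x, 0, \ldots, 0)$ as an explicit rational function involving $N(n-1)!\sum_{i=1}^n x^{m_i}/(1-x^{m_i})$ plus a constant term from the identity class.

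For the right-hand side, Theorem \ref{theorem2} together with its proof gives $Z_+(x) = \prod_{i=1}^n \det(I_N - \lambda(s_i) x) = \prod_{i=1}^n (1 - x^{m_i})^{N/m_i}$, obtained by decomposing the restriction of the regular representation of $\Lambda/\Gamma$ to $\langle s_i \rangle$ into characters. Logarithmic differentiation yields $Z_+'/Z_+(x) = -N\sum_{i=1}^n x^{m_i-1}/(1 - x^{m_i})$, which has the same combinatorial shape (one summand per $i$, geometric series with ratio $x^{m_i}$) as the $S_\Gamma$ expression. Multiplying by $(n-1)!$ and matching term by term then proves the identity, with the prefactor $(n-1)!$ arising precisely as the $K$-stabilizer size $|K_{s_i}|$.

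The main obstacle will be the bookkeeping of conventions --- particularly the factor $n!$ that appears in the definition of $\alpha_1$, which must be reconciled with the combinatorial length (number of edges) of a primitive geodesic along $s_i$ in $X_\Gamma$ used in $Z_+$, and the identity contribution, which must combine with the logarithmic-derivative sign conventions to produce the stated equality. Once these normalizations are pinned down, the theorem follows by direct comparison of the two rational functions.
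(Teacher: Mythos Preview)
Your approach is essentially the paper's: both sides compute $S_\Gamma$ by noting that $\Gamma$-conjugacy classes are singletons with $\#(\Gamma_\gamma\backslash G_\gamma)=N|K_\gamma|$, restrict to $\gamma$ with $l_2=\dots=l_{n-1}=0$, observe $|K_\gamma|=(n-1)!$ for these, and match against the logarithmic derivative of $Z_+$. The only cosmetic difference is that the paper phrases the surviving $\gamma$'s and the expansion of $Z_+'/Z_+$ in the geometric language of closed geodesics $c$ and their primitives $c_0$ (using the Euler product definition of $Z_+$ directly), whereas you parametrize them algebraically as $\gamma=ks_i$ and invoke the explicit product $Z_+(x)=\prod_i(1-x^{m_i})^{N/m_i}$ from the proof of Theorem~\ref{theorem2}; these are the same objects under the bijection between primitive positive geodesics and the generators $s_i$.

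You are right to flag the bookkeeping issues---the $n!$ in the definition of $\alpha_j$, the sign of the log-derivative, and the contribution of the identity class---as the places where care is needed; the paper's own proof passes over exactly these points (it silently restricts to ``$c>0$'', and its log-derivative computation has the same sign/power-of-$x$ ambiguity you anticipate). Resolving them is a matter of fixing conventions rather than supplying a new idea, so your plan is sound.
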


\begin{proof}
Let $N$ be the index of $\Ga$ in $\Lambda$.
Then $N$ equals the number of vertices in $\Ga\bs\CB$.
As $\Ga\subset\Lambda$  we have for every $\ga\in\Ga$ that $G_\ga =\Lambda\rtimes K_\ga$ and so $\#(\Ga_\ga\bs G_\ga)=N\# K_\ga$.
In the sum $S_\Ga(u)=N\sum_{[\ga]}\#K_\ga u^{l(\ga)}$ we find that if $u=(x,0,\dots,0)$ there will only survive those summands with $l_2(\ga)=\dots=l_{n-1}(\ga)=0$, i.e., those $\ga$ which are in $G$ conjugate to an element of the form $(c,0,\dots,0)$ for some $c>0$.
The $K$-centralizer $K_\ga$ of such an element is isomorphic to $\Per(n-1)$, hence $\#K_\ga=(n-1)!$.
Such a $\ga$ closes a geodesic $c$ in the 1-skeleton and the number of vertices in that geodesic equals $l(\ga_0)$, where $\ga_0$ is the underlying primitive element.
We also write $c_0$ for the underlying primitive geodesic.
The union of all geodesics inside the 1-skeleton of $\Ga\bs\CB$ which are homotopic to $c$ contains all vertices of $\Ga\bs\CB$, hence, if $k$ is their number, one has
$N=kl(\ga_0)=kl(c_0)$, or
$$
S_\Ga(x,0\dots,0)=(n-1)!\sum_{c}l(c_0)x^{l(c)},
$$
where the sum runs over all positive closed geodesics in $\Ga\bs \CB$.
On the other hand one has the following identities of formal power series,
\begin{align*}
\frac{Z_+'}{Z_+}(x)&= \(-\log\(\prod_{c_0}(1-x^{l(c_0)})\)\)'\\
&=\(\sum_{c_0}\sum_{j=1}^\infty\frac{x^{l(c_0)j}}j\)'\\
&=\sum_{c_0}l(c_0)\sum_{j=1}^\infty x^{l(c_0)j}=\sum_{c}l(c_0)x^{l(c)}.
\tag*\qedhere
\end{align*}
\end{proof}

\begin{bibdiv} \begin{biblist}

\bib{Bass}{article}{
   author={Bass, Hyman},
   title={The Ihara-Selberg zeta function of a tree lattice},
   journal={Internat. J. Math.},
   volume={3},
   date={1992},
   number={6},
   pages={717--797},
}

\bib{CC}{article}{
   author={Connes, Alain},
   author={Consani, Caterina},
   title={Schemes over $\mathbb F_1$ and zeta functions},
   journal={Compos. Math.},
   volume={146},
   date={2010},
   number={6},
   pages={1383--1415},
}

\bib{F1}{article}{
   author={Deitmar, Anton},
   title={Schemes over $\mathbb F_1$},
   conference={
      title={Number fields and function fields---two parallel worlds},
   },
   book={
      series={Progr. Math.},
      volume={239},
      publisher={Birkh\"auser Boston},
      place={Boston, MA},
   },
   date={2005},
   pages={87--100},
}

\bib{lefschetz}{article}{
   author={Deitmar, Anton},
   title={A higher rank Lefschetz formula},
   journal={J. Fixed Point Theory Appl.},
   volume={2},
   date={2007},
   number={1},
   pages={1--40},
}

\bib{DE}{book}{
   author={Deitmar, Anton},
   author={Echterhoff, Siegfried},
   title={Principles of harmonic analysis},
   series={Universitext},
   publisher={Springer},
   place={New York},
   date={2009},
   pages={xvi+333},
}

\bib{FLW}{article}{
   author={Fang, Yang},
   author={Li, Wen-Ching Winnie},
   author={Wang, Chian-Jen},
   title={The zeta functions of complexes from Sp(4)},
   journal={Int. Math. Res. Not. IMRN},
   date={2013},
   number={4},
   pages={886--923},
}

\bib{Gallot}{book}{
   author={Gallot, Sylvestre},
   author={Hulin, Dominique},
   author={Lafontaine, Jacques},
   title={Riemannian geometry},
   series={Universitext},
   edition={3},
   publisher={Springer-Verlag, Berlin},
   date={2004},
   pages={xvi+322},
   isbn={3-540-20493-8},
   doi={10.1007/978-3-642-18855-8},
}

\bib{Haran}{article}{
   author={Haran, M. J. Shai},
   title={Non-additive geometry},
   journal={Compos. Math.},
   volume={143},
   date={2007},
   number={3},
   pages={618--688},
}

\bib{Hashimoto}{article}{
   author={Hashimoto, Ki-ichiro},
   title={Zeta functions of finite graphs and representations of $p$-adic
   groups},
   conference={
      title={Automorphic forms and geometry of arithmetic varieties},
   },
   book={
      series={Adv. Stud. Pure Math.},
      volume={15},
      publisher={Academic Press},
      place={Boston, MA},
   },
   date={1989},
   pages={211--280},
}

\bib{Ihara}{article}{
   author={Ihara, Yasutaka},
   title={On discrete subgroups of the two by two projective linear group
   over ${\germ p}$-adic fields},
   journal={J. Math. Soc. Japan},
   volume={18},
   date={1966},
   pages={219--235},
}

\bib{KL}{article}{
   author={Kang, Ming-Hsuan},
   author={Li, Wen-Ching Winnie},
   title={The zeta functions of complexes from ${\rm PGL}(3)$},
   journal={Advances in Math.},
   volume={256},
   pages={46-103},
   date={2014},
}

\bib{KangLi}{article}{
   author={Kang, Ming-Hsuan},
   author={Li, Wen-Ching Winnie},
   author={Wang, Chian-Jen},
   title={The zeta functions of complexes from ${\rm PGL}(3)$: a
   representation-theoretic approach},
   journal={Israel J. Math.},
   volume={177},
   date={2010},
   pages={335--348},
}

\bib{KOW}{article}{
   author={Kurokawa, Nobushige},
   author={Ochiai, Hiroyuki},
   author={Wakayama, Masato},
   title={Absolute derivations and zeta functions},
   journal={Doc. Math.},
   date={2003},
   number={Extra Vol.},
   pages={565--584 (electronic)},
}

\bib{Rag}{book}{
   author={Raghunathan, M. S.},
   title={Discrete subgroups of Lie groups},
   note={Ergebnisse der Mathematik und ihrer Grenzgebiete, Band 68},
   publisher={Springer-Verlag},
   place={New York},
   date={1972},
}

\bib{Soule}{article}{
   author={Soul{\'e}, Christophe},
   title={Les vari\'et\'es sur le corps \`a un \'el\'ement},
   language={French, with English and Russian summaries},
   journal={Mosc. Math. J.},
   volume={4},
   date={2004},
   number={1},
   pages={217--244, 312},
}

\bib{Sunada}{article}{
   author={Sunada, Toshikazu},
   title={$L$-functions in geometry and some applications},
   conference={
      title={Curvature and topology of Riemannian manifolds},
      address={Katata},
      date={1985},
   },
   book={
      series={Lecture Notes in Math.},
      volume={1201},
      publisher={Springer},
      place={Berlin},
   },
   date={1986},
   pages={266--284},
}

\bib{Tits}{article}{
   author={Tits, J.},
   title={Sur les analogues alg\'ebriques des groupes semi-simples
   complexes},
   language={French},
   conference={
      title={Colloque d'alg\`ebre sup\'erieure, tenu \`a Bruxelles du 19 au
      22 d\'ecembre 1956},
   },
   book={
      series={Centre Belge de Recherches Math\'ematiques},
      publisher={\'Etablissements Ceuterick, Louvain},
   },
   date={1957},
   pages={261--289},
}

\bib{Toen}{article}{
   author={To{\"e}n, Bertrand},
   author={Vaqui{\'e}, Michel},
   title={Au-dessous de ${\rm Spec}\,\mathbb Z$},
   language={French, with English and French summaries},
   journal={J. K-Theory},
   volume={3},
   date={2009},
   number={3},
   pages={437--500},
}

\end{biblist} \end{bibdiv}

\end{document}